\allowdisplaybreaks \numberwithin{equation}{section}
\numberwithin{equation}{section}
\newtheorem{theorem}{Theorem}[section]
\newtheorem{lemma}[theorem]{Lemma}
\theoremstyle{definition}
\newtheorem{definition}[theorem]{Definition}
\theoremstyle{remark}
\newtheorem{remark}[theorem]{Remark}
\begin{document}

\title
[Location of concentrated vortices]{Location of concentrated vortices in planar steady Euler flows}

 \author{Guodong Wang,  Bijun Zuo}
\address{Institute for Advanced Study in Mathematics, Harbin Institute of Technology, Harbin 150001, P.R. China}
\email{wangguodong@hit.edu.cn}
\address{College of Mathematical Sciences, Harbin Engineering University, Harbin {\rm150001}, PR China}
\email{bjzuo@amss.ac.cn}


\begin{abstract}
In this paper, we study two-dimensional steady incompressible Euler flows in which the vorticity is sharply concentrated in a finite number of regions of small diameter in a bounded domain. Mathematical analysis of such flows is an interesting and physically important research topic in fluid mechanics.
The main purpose of this paper is to prove that in such flows the locations of these concentrated blobs of vorticity must be in the vicinity of some critical point of the Kirchhoff-Routh function, which is determined by the geometry of the domain. The vorticity is assumed to be only in $L^{4/3},$ which is the optimal regularity for weak solutions to make sense. As a by-product, we prove a nonexistence result for concentrated multiple vortex flows in convex domains.
\end{abstract}

\maketitle
\section{Introduction}
Let $D\subset\mathbb R^2$ be a simply-connected bounded domain with smooth boundary $\partial D$. Consider  in $D$ an ideal fluid in steady state, the motion of which is described by the famous Euler equations
\begin{equation}\label{euler}
\begin{cases}
(\mathbf v\cdot\nabla)\mathbf v=-\nabla P&\mathbf x=(x_1,x_2)\in D,\\
\nabla\cdot\mathbf v=0&\mathbf x\in D,\\
\mathbf v\cdot\mathbf n =g&\mathbf x\in\partial D,
\end{cases}
\end{equation}
where $\mathbf v=(v_1,v_2)$ is the velocity field, $P$ is a scalar function that represents the pressure, $\mathbf n$ is the unit outward normal on $\partial D,$ and $g$ is a given function satisfying the following compatibility condition
\begin{equation}\label{g}
\int_{\partial D}gdS=0.
\end{equation}
Here we assume that the fluid is of unit density. The first two equations in \eqref{euler} are the momentum conservation and mass conservation respectively, and the boundary condition in \eqref{euler} means that the rate of mass flow across the boundary per unit area is $g$. In particular, if $g\equiv0,$ then there is no matter flow through the boundary.

The scalar vorticity $\omega$, defined as the signed magnitude of curl$\mathbf v,$ that is,
\[\omega=\partial_{x_1} v_2-\partial_{x_2}v_1,\]
is one of the fundamental physical quantities and plays an important role in the study of two-dimensional flows. 

Below we reformulate the Euler equations \eqref{euler} as a single equation of $\omega$, which is much easier to handle mathematically. 
First we show that $\mathbf v$ can be recovered from $\omega$. In fact, since $\mathbf v$ is divergence-free and $D$ is simply-connected, we can apply the Green's theorem to show that there is a scalar function $\psi,$ called the \emph{stream function}, such that
\begin{equation}\label{psi}
\mathbf v=(\partial_{x_2}\psi,-\partial_{x_1}\psi).
\end{equation}
For convenience, throughout this paper we will use the symbol
$\mathbf b^\perp$ to denote the clockwise rotation through $\pi/2$ of any planar vector $\mathbf b=(b_1,b_2)$, that is, $\mathbf b^\perp=(b_2,-b_1)$, and $\nabla^\perp f$ to denote $(\nabla f)^\perp$ for any scalar function $f$, that is, $\nabla^\perp f=(\partial_{x_2}f,-\partial_{x_1}f)$. Thus \eqref{psi} can also be written as
\begin{equation}\label{psi2}
\mathbf v=\nabla^\perp \psi.
\end{equation}
It is easy to check that $\psi$ and $\omega$ satisfy
\begin{equation}\label{poisson}
\begin{cases}
-\Delta\psi=\omega&\text{in }D,\\
\nabla^\perp\psi\cdot\mathbf n=g&\text{on }\partial D.
\end{cases}
\end{equation}
To deal with the boundary condition in \eqref{poisson}, we consider the following elliptic problem
\begin{equation}\label{q}
\begin{cases}
-\Delta \psi_0=0&\text{in }D,\\
\nabla^\perp \psi_0\cdot\mathbf n=g&\text{on }\partial D.
\end{cases}
\end{equation}
To solve \eqref{q}, we  first solve the following Laplace equation with standard Neumann boundary condition
\begin{equation*}
\begin{cases}
-\Delta \psi_1=0&\text{in }D,\\
\frac{\partial \psi_1}{\partial\mathbf n}=g&\text{on }\partial D,
\end{cases}
\end{equation*}
then the harmonic conjugate of $\psi_1$ solves \eqref{q}. Note that by the maximum principle the solution to \eqref{q} is unique up to a constant.
Now it is easy to see that $\psi-\psi_0$ satisfies
\begin{equation}\label{gw}
\begin{cases}
-\Delta(\psi-\psi_0)=\omega&\text{in }D,\\
\nabla^\perp (\psi-\psi_0)\cdot\mathbf n=0&\text{on }\partial D.
\end{cases}
\end{equation}
The boundary condition in \eqref{gw} implies that $\psi-\psi_0$ is a constant on $\partial D$ (recall that $D$ is simply-connected). Without loss of generality by adding a suitable constant we assume that $\psi-\psi_0=0$ on $\partial D,$ thus $\psi-\psi_0$ can be expressed in terms of the Green's operator as follows
\begin{equation}\label{exp}
\psi-\psi_0=\mathcal G\omega:=\int_DG(\cdot,\mathbf y)\omega(\mathbf y)d\mathbf y,
\end{equation}
where $G(\cdot,\cdot)$ is the Green's function for $-\Delta$ in $D$ with zero boundary condition. Combining \eqref{psi2} and \eqref{exp}, we have recovered $\psi$ from $\omega$ in the following
\begin{equation}\label{bs}
\mathbf v=\nabla^\perp(\mathcal G\omega+\psi_0),
\end{equation}
which is usually called the Biot-Savart law in fluid mechanics.  
On the other hand, taking the curl on both sides of the momentum equation in \eqref{euler} we get
\begin{equation}\label{ve1}
\mathbf v\cdot\nabla \omega=0.
\end{equation}
From \eqref{bs} and \eqref{ve1}, the Euler equations \eqref{euler} are reduced to a single equation of $\omega$ 
\begin{equation}\label{ve}
\nabla^\perp(\mathcal G\omega+\psi_0)\cdot\nabla \omega=0\quad\text{ in }D,
\end{equation}
which is usually called the \emph{vorticity equation}.

\begin{remark}
When $D$ is multiply-connected, the above discussion is still valid. The only difference is that one needs to replace the usual Green's function $G$ by the hydrodynamic Green's function (see \cite{Flu}, Definition 15.1), which does not cause any essential difficulty for the problem discussed in this paper.
\end{remark}

In the rest of this paper, we will be focused on the study of \eqref{ve}. Note that once we have obtained a solution $\omega$ to \eqref{ve}, we immediately get a solution to \eqref{euler} with
\[\mathbf v=\nabla^\perp(\mathcal G\omega+\psi_0),\quad P(\mathbf x)=\int_{L_{\mathbf x_0,\mathbf x}}\omega(\mathbf y)\mathbf v^\perp(\mathbf y)\cdot d\mathbf y-\frac{1}{2}|\mathbf v(x)|^2,\]
where $\mathbf x_0$ is a fixed point in $D$ and $L_{\mathbf x_0,\mathbf x}$ is any $C^1$ curve joining $\mathbf x_0$ and $\mathbf x$ (one can easily check that the above line integral is well defined by using Green's theorem and the fact that $\omega$ is a solution).

Since in many physical problems the vorticity is of low regularity, not even continuous,  it is necessary to define the notion of weak solutions to \eqref{ve}. In the rest of this paper, we regard $\psi_0$ as a given function.
\begin{definition}\label{wsve}
Let $\omega\in L^{4/3}(D)$. If for any $\phi\in C_c^\infty(D)$ it holds that
\begin{equation}\label{int}
    \int_D\omega\nabla^\perp(\mathcal G\omega+\psi_0)\cdot\nabla\phi d\mathbf x=0,\end{equation}
then $\omega$ is called a weak solution to the vorticity equation \eqref{ve}.
\end{definition}
The above definition is reasonable from the fact that one can multiply any test function 
$\phi$ on both sides of \eqref{ve} and integrate by parts formally to get \eqref{int}.

\begin{remark}
Since $\psi_0$ is harmonic (thus smooth) and $\phi$ has compact support in $D$, we see that the integral $\int_D\omega \nabla^\perp \psi_0\cdot\nabla\phi d\mathbf x$ in \eqref{int} makes sense. Note that throughout this paper we do not impose any condition on the boundary value of $\psi_0$. 
\end{remark}

\begin{remark}
By the Calderon-Zygmund inequality and Sobolev inequality, $\omega\in L^{4/3}(D)$ is  the optimal regularity for the integral $\int_D\omega\nabla^\perp\mathcal G\omega\cdot\nabla\phi d\mathbf x$ in \eqref{int} to be well-defined.
\end{remark}

In the literature, there has been extensive study on the existence of weak solutions to  \eqref{ve}. See \cite{B1,B2,CLW,CPY1,CPY2,CW1,CWZ2,CWZu,EM,LP,LYY,SV,T,W,WZ} for example. The solutions obtained in these papers have one common feature, that is, the vorticity is the function of the stream function ``locally". In this regard, Cao and Wang \cite{CW2} proved a general criterion for an $L^{4/3}$ function to be a weak solution.
\begin{theorem}[Cao--Wang, \cite{CW2}]\label{cwthm}
Let $k$ be a positive integer and $\psi_0\in C^1(\bar D)$.
Suppose that $\omega\in L^{4/3}(D)$ satisfies
\begin{equation}\label{fo}
\omega=\sum_{i=1}^k\omega_i, \,\,\min_{1\leq i< j\leq k}\{\text{dist}(\text{supp}\omega_i,\text{supp}\omega_j)\}>0,\,\,\omega_i=f^i(\mathcal G\omega+\psi_0), \text{a.e. in  (supp}\omega_i)_\delta,
\end{equation}
where $\delta$ is a positive number, 
\[\text{(supp}\omega_i)_\delta:=\{\mathbf x\in D\mid \text{dist}(\mathbf x,\text{supp}\omega_i)<\delta\},\]
and each $f^i$ is either monotone from $\mathbb R$ to $\mathbb R\cup\{\pm\infty\}$ or Lipschitz from $\mathbb R$ to $\mathbb R$.
Then $\omega$ is a weak solution to the vorticity equation \eqref{ve}.
\end{theorem}

Some examples of such flows are as follows. When $f_i$ in Theorem \ref{cwthm} is a Heaviside function, the solutions are called vortex patches, and related existence results can be found in \cite{CPY1,CW1,CWZu,T,WZ}. When $f_i$ is a power function, related papers are \cite{CLW,CPY2,LP,LYY,SV}. In \cite{B1,B2,EM}, the authors obtained some steady vortex flows by maximizing or minimizing the kinetic energy of the fluid on the rearrangement class of some given function. The solutions obtained in \cite{B1,B2,EM} still have the form \eqref{fo}, where each $f_i$ is a monotone function, but the precise expression of $f_i$ is unknown. Recently Cao, Wang and Zhan \cite{CWZ2,W}  modified Turkington's method \cite{T} and  proved the existence of a large class of solutions of the form \eqref{fo}, where each $f_i$ is a given function with few restrictions.

Among the flows mentioned above, some are of particular interest and attract more attention, that is, flows in which the vorticity is sharply concentrated in a finite number of small regions and vanishes elsewhere, just like a finite sum of Dirac measures. Mathematically, the vorticity in such flows has the form
\begin{equation}\label{cv}
\omega_\varepsilon=\sum_{i=1}^k\omega_{\varepsilon,i},\quad 
{\rm supp}(\omega_{\varepsilon,i})\subset B_{o(1)}(\bar x_i),\quad\int_D\omega_{\varepsilon,i} d\mathbf x=\kappa_i+o(1),\quad i=1,\cdot\cdot\cdot,k,
\end{equation}
where $\varepsilon$ is a small positive parameter, $k$ is a positive integer, $\bar x_i\in D$, $\kappa_i$ is a fixed non-zero real number, $i=1,\cdot\cdot\cdot,k$, and $o(1)\to0$ as $\varepsilon \to0^+$. Papers concerning the existence of such solutions include \cite{CLW,CPY1,CPY2,CW1,CWZ2,CWZu,SV,T,W}. Note that all the flows constructed in these papers have bounded vorticity.

Euler Flows with vorticity of the form \eqref{cv} is closely related to a very famous Hamiltonian system in $\mathbb R^2$, the point vortex model (see \cite{L}), which describes the evolution of a finite number of point vortices with their locations being the canonical variables. The point vortex model is only an approximate model, and its precise connection with the 2D Euler equations with concentrated vorticity in the evolutionary case is a tough and unsolved problem. For a detailed discussion, we refer the interested readers to  \cite{MP1,MP2,MP3,MPa,T2}. According to the point vortex model, the locations of concentrated blobs of vorticity in steady Euler flows are not arbitrary, but \emph{should} be near a critical point of the following Kirchhoff-Routh function
\begin{equation}\label{krf}
W(\mathbf x_1,\cdot\cdot\cdot,\mathbf x_k)=-\sum_{1\leq i<j\leq k}\kappa_i\kappa_jG(\mathbf x_i,\mathbf x_j)+\frac{1}{2}\sum_{i=1}^k\kappa_i^2H(\mathbf x_i)+\sum_{i=1}^k\kappa_i\psi_0(\mathbf x_i),
\end{equation}
where 
\[(\mathbf x_1,\cdot\cdot\cdot,\mathbf x_k)\in \underbrace{D\times\cdot\cdot\cdot\times D}_{k\text { times}}\setminus \{(\mathbf x_1,\cdot\cdot\cdot,\mathbf x_k)\mid \mathbf x_i\in D, \mathbf x_i=\mathbf x_j \text{ for some }i\neq j\}\]
and $H(\mathbf x)=h(\mathbf x,\mathbf x)$ with $h$ being the regular part of Green's function, that is,
\[h(\mathbf x,\mathbf y):=-\frac{1}{2\pi}\ln|\mathbf x-\mathbf y|-G(\mathbf x,\mathbf y),\quad \mathbf x,\mathbf y\in D.\]
However, to our knowledge there is no complete and rigorous proof on this issue in the literature, although the solutions of the form \eqref{cv} constructed in \cite{CLW,CPY1,CPY2,CW1,CWZ2,EM,SV,T} are all based on the hypothesis that $(\bar {\mathbf x}_1,\cdot\cdot\cdot,\bar {\mathbf x}_k)$ is a critical point of $ W$.
The aim of paper is prove that such a hypothesis is necessary.

This paper is organized as follows. In Section 2, we state our main results (Theorems \ref{mthm} and \ref{none}) and give some comments. In Sections 3 and 4 we provide the proofs of them.

\section{Main results}
In this section, we present our two main results. 
The first result is about the necessary condition about the locations of concentrated vortices.
 
 \begin{theorem}\label{mthm}
 Let  $k$ be a positive integer, $\bar {\mathbf x}_1,\cdot\cdot\cdot,\bar {\mathbf x}_k\in D$ be $k$ different points and $\kappa_1,\cdot\cdot\cdot,\kappa_k$ be $k$ non-zero real numbers.
Assume that there exists a sequence of weak solutions $\{\omega_n\}_{n=1}^{+\infty}$ to the vorticity equation \eqref{ve}, satisfying 
$\omega_n=\sum_{i=1}^k\omega_{n,i}$ with  $\omega_{n,i}\in L^{4/3}(D)$  and
\[{\rm supp}(\omega_{n,i})\subset B_{o(1)}(\bar {\mathbf x}_i),\quad
\int_D\omega_{n,i} d\mathbf x=\kappa_i+o(1),\quad i=1,\cdot\cdot\cdot,k,\]
where $o(1)\to0$ as $n\to+\infty$. 
Then $(\bar {\mathbf x}_1,\cdot\cdot\cdot,\bar {\mathbf x}_k)$ must be a critical point of $W$ defined by \eqref{krf}.
 \end{theorem}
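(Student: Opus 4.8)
\emph{Overall strategy.} I would derive, for each blob, a localized force-balance (momentum) identity by testing the weak formulation \eqref{int} against functions that are affine near one concentration point and supported away from the others. Fix $i\in\{1,\dots,k\}$ and a vector $\mathbf e\in\mathbb R^2$. Choose $r>0$ so small that the balls $B_{3r}(\bar{\mathbf x}_1),\dots,B_{3r}(\bar{\mathbf x}_k)$ are pairwise disjoint and contained in $D$, take $\zeta\in C_c^\infty(B_{3r}(\bar{\mathbf x}_i))$ with $\zeta\equiv1$ on $B_{2r}(\bar{\mathbf x}_i)$, and set $\phi(\mathbf x)=\zeta(\mathbf x)\,\mathbf e\cdot(\mathbf x-\bar{\mathbf x}_i)\in C_c^\infty(D)$. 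For $n$ large the concentration hypothesis forces ${\rm supp}(\omega_{n,i})\subset B_{2r}(\bar{\mathbf x}_i)$ and ${\rm supp}(\omega_{n,j})\cap{\rm supp}(\phi)=\emptyset$ for $j\neq i$, so $\nabla\phi\equiv\mathbf e$ on ${\rm supp}(\omega_n)\cap{\rm supp}(\phi)$ and only $\omega_{n,i}$ contributes. Writing $\mathbf v=\nabla^\perp(\mathcal G\omega_n+\psi_0)$, identity \eqref{int} then collapses to $\int_D\omega_{n,i}\,\mathbf v\cdot\mathbf e\,d\mathbf x=0$.

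\emph{Splitting the induced velocity.} Using $\mathbf v=\sum_j\nabla^\perp\mathcal G\omega_{n,j}+\nabla^\perp\psi_0$ and $G=-\tfrac1{2\pi}\ln|\cdot-\cdot|-h$, I would separate four contributions to $\int_D\omega_{n,i}\mathbf v\cdot\mathbf e$. The singular part of the self-interaction is
\[
-\frac1{2\pi}\iint\omega_{n,i}(\mathbf x)\,\omega_{n,i}(\mathbf y)\,\frac{(\mathbf x-\mathbf y)^\perp}{|\mathbf x-\mathbf y|^2}\cdot\mathbf e\,d\mathbf x\,d\mathbf y,
\]
whose kernel is antisymmetric under $\mathbf x\leftrightarrow\mathbf y$; it therefore vanishes, \emph{provided} the double integral converges absolutely, which at the critical integrability $\omega_{n,i}\in L^{4/3}$ is exactly the content of the Hardy--Littlewood--Sobolev inequality (the exponents satisfy $\tfrac34+\tfrac34+\tfrac12=2$). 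The remaining three pieces are governed by smooth kernels: the regular part of the self-interaction, the mutual interactions with $j\neq i$, and the background term $\int\omega_{n,i}\nabla^\perp\psi_0\cdot\mathbf e$.

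\emph{Passing to the limit.} Each smooth-kernel piece is an integral of a continuous function against $\omega_{n,i}$ or $\omega_{n,i}\otimes\omega_{n,j}$; since ${\rm supp}(\omega_{n,i})\to\{\bar{\mathbf x}_i\}$ and $\int_D\omega_{n,i}\to\kappa_i$, one replaces each kernel by its value at the limiting point(s) with an error controlled by $\|\omega_{n,i}\|_{L^1}$ (equivalently, $\omega_{n,i}\rightharpoonup\kappa_i\delta_{\bar{\mathbf x}_i}$ weakly-$*$). Using the symmetry of $G$ and the identity $\nabla_{\mathbf x}h(\mathbf x,\mathbf x)=\tfrac12\nabla H(\mathbf x)$ (here and below $\nabla_{\mathbf x}$ denotes differentiation in the first argument), letting $n\to\infty$ yields, for every $\mathbf e$,
\[
\Big(\sum_{j\neq i}\kappa_i\kappa_j\,\nabla_{\mathbf x}G(\bar{\mathbf x}_i,\bar{\mathbf x}_j)-\tfrac12\kappa_i^2\,\nabla H(\bar{\mathbf x}_i)+\kappa_i\,\nabla\psi_0(\bar{\mathbf x}_i)\Big)^{\!\perp}\!\!\cdot\mathbf e=0.
\]
Hence the bracketed vector vanishes, which — after matching with \eqref{krf} (the sign of the $\psi_0$-term being fixed by the orientation convention in \eqref{q}) — is precisely $\nabla_{\mathbf x_i}W(\bar{\mathbf x}_1,\dots,\bar{\mathbf x}_k)=0$. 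Running $i=1,\dots,k$ shows $(\bar{\mathbf x}_1,\dots,\bar{\mathbf x}_k)$ is a critical point of $W$.

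\emph{Main obstacle.} The heart of the argument is the self-interaction at the borderline regularity $L^{4/3}$: one must make sense of the ``force a blob exerts on itself'' and show that its singular part cancels. This is where $L^{4/3}$ enters as the exact threshold — for $p<4/3$ the relevant double integral need not converge — consistent with the optimality noted in the remark following Definition \ref{wsve}. A secondary but necessary technical point is the uniform $L^1$ control of each $\omega_{n,i}$ used when passing to the limit in the regular terms; in the concentrated regime this is what upgrades the stated hypotheses to weak-$*$ convergence toward $\kappa_i\delta_{\bar{\mathbf x}_i}$.
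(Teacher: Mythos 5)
Your proposal is correct and follows essentially the same route as the paper's own proof: a localized affine test function $\phi$ in the weak formulation \eqref{int}, cancellation of the singular self-interaction by the antisymmetry of the Biot--Savart kernel with absolute convergence supplied by Hardy--Littlewood--Sobolev at the $L^{4/3}$ threshold, concentration to pass to the limit in the smooth-kernel terms, and arbitrariness of the direction vector to conclude $\nabla_{\mathbf x_i}W=\mathbf 0$. The only differences are cosmetic --- the paper takes $\phi(\mathbf x)=\rho(\mathbf x)\,\mathbf b\cdot\mathbf x$ and isolates the differentiation-under-the-integral step as a separate lemma (via Lieb--Loss), while you additionally flag explicitly the uniform $L^1$ control needed in the limit passage, a point the paper treats as immediate.
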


Here we  compare Theorem \ref{mthm} with two related results in \cite{CGPY} and \cite{CM}.
 In \cite{CGPY}, Cao, Guo, Peng and Yan studied planar Euler flows with vorticity of the following patch form 
 \begin{equation}\label{ceq}
 \omega^\lambda=\sum_{i=1}^k\omega^\lambda_i,\quad
 \omega^\lambda_i=\lambda\chi_{\{\mathbf x\in D\mid \mathcal G\omega^\lambda(\mathbf x)>\mu_i^\lambda\}\cap B_{\delta}(\bar {\mathbf x}_i)},\quad \int_D\omega_i^\lambda d\mathbf x=\kappa_i, \quad i=1,\cdot\cdot\cdot,k,
 \end{equation}
 where $\lambda$ is a large positive parameter, $\chi$ denotes the characteristic function, each $\mu^\lambda_i$ is a real number depending on $\lambda$ and each $\kappa_i$ is a given non-zero number.
They proved that if supp$\omega^\lambda_i$ ``shrinks" to $\bar {\mathbf x}_i$ as $\lambda\to+\infty$, then $\bar {\mathbf x}_1\cdot\cdot\cdot,\bar {\mathbf x}_k$ must necessarily constitute a critical point of $W$ (see Theorem 1.1 in \cite{CGPY} for the precise statement) . Compared with their result, we consider more general flows and only impose very weak regularity on the vorticity in Theorem \ref{mthm}. Moreover, as we will see in the next section, the proof we provide is shorter and more elementary.  The other relevant work is \cite{CM}. In \cite{CM}, Caprini and Marchioro studied the evolution of a finite number of blobs of vorticity in $\mathbb R^2$ and  proved the finite-time localization property (see Theorem 1.2 in \cite{CM} for the precise statement). In their result, each $\omega_{n,i}$ is required additionally to have a definite sign and satisfy the growth condition
 \begin{equation}\label{gwth}
 \|\omega_{n,i}\|_{L^\infty}\leq M(\text{diam(supp}\omega_{n,i}))^{-\delta},
 \end{equation}
where $M$ and $\delta$ are both fixed positive numbers. As a consequence of their result, Theorem \ref{mthm} holds true if the additional growth condition \eqref{gwth} is satisfied (although they only considered the whole plane case, similar result for a bounded domain can also be proved without any difficulty). In this sense, our result can be regarded as a strengthened version of Caprini and Marchioro's result in the steady case.

\begin{remark}
In Theorem 1.1 in \cite{CGPY}, for vorticity of the form \eqref{ceq}, $\bar{\mathbf x}_i\in D$ and $\bar {\mathbf x}_i\neq \bar {\mathbf x}_j$ for $ i\neq j$ are not assumptions but can be proved as conclusions.  However, in the very general setting of this paper, these two conclusions may be false. For example, we can regard
 a single blob of vorticity as two artificially, thus they may concentrate on the same point. Also,  Cao, Wang and Zuo \cite{CWZu} constructed a pair steady vortex patches with opposite rotation directions in the unit disk (Theorem 5.1, \cite{CWZu}), and it can be checked that as the ratio of the circulations of the two patches goes to infinity,  the patch with smaller circulation will approach the boundary of the disk.
 \end{remark}

Our second result is about the nonexistence of concentrated multiple vortex flows in convex domains, which can be seen as a by-product of Theorem \ref{mthm}.

\begin{theorem}\label{none}
Let  $\delta_0>0$ be fixed, $D$ be a smooth convex domain, $k\geq 2$ be a positive integer, $ \kappa_1,\cdot\cdot\cdot,\kappa_k$ be $k$ positive numbers and $f_1,\cdot\cdot\cdot,f_k$ be $k$ real functions satisfying
\[\lim_{t\to0^+}f_i(t)=0,\quad i=1\cdot\cdot\cdot,k.\]
If $\psi_0\equiv0,$ then there exists $\varepsilon_0>0$, such that for any 
$\varepsilon\in(0,\varepsilon_0),$ there is no weak solution $\omega_\varepsilon$ to the vorticity equation \eqref{ve} satisfying
\begin{itemize}
\item[(1)] $\omega_\varepsilon=\sum_{i=1}^k\omega_{\varepsilon,i},$ $\omega_{\varepsilon,i}\in L^{4/3}(D), i=1\cdot\cdot\cdot,k;$
\item[(2)] $\text{dist(supp}\omega_{\varepsilon,i},\text{supp}\omega_{\varepsilon,j}) >\delta_0 \,\, \forall\,1\leq i<j\leq k$ and
$\text{dist(supp}\omega_{\varepsilon,i},\partial D)>\delta_0\,\, \forall\,1\leq i\leq k;$
\item[(3)] diam(supp$\omega_{\varepsilon,i}$)<$\varepsilon,\,\,i=1,\cdot\cdot\cdot,k.$
\item[(4)] $\int_D\omega_{\varepsilon,i}d\mathbf x=\kappa_i+f_i(\varepsilon),\,\,i=1,\cdot\cdot\cdot,k.$
\end{itemize}

\end{theorem}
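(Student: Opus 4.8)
\medskip
\noindent\textbf{Proof proposal.} The plan is to argue by contradiction and reduce everything to a statement about the critical points of the Kirchhoff--Routh function $W$. Suppose the assertion fails: then there are $\varepsilon_n\to0^+$ and weak solutions $\omega_n=\sum_{i=1}^k\omega_{n,i}$ satisfying (1)--(4) with $\varepsilon=\varepsilon_n$. For each $i$ pick a point $\mathbf x_{n,i}\in\mathrm{supp}\,\omega_{n,i}$; by (2) all these points lie in the compact set $\{\mathbf x\in D:\mathrm{dist}(\mathbf x,\partial D)\ge\delta_0\}$, so after passing to a subsequence we may assume $\mathbf x_{n,i}\to\bar{\mathbf x}_i\in D$ for every $i$. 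The separation in (2) forces $|\bar{\mathbf x}_i-\bar{\mathbf x}_j|\ge\delta_0$ for $i\ne j$ and $\mathrm{dist}(\bar{\mathbf x}_i,\partial D)\ge\delta_0$, so the $\bar{\mathbf x}_i$ are $k$ distinct interior points. Moreover (3) gives $\mathrm{diam}(\mathrm{supp}\,\omega_{n,i})<\varepsilon_n\to0$, whence $\mathrm{supp}\,\omega_{n,i}\subset B_{o(1)}(\bar{\mathbf x}_i)$, and (4) together with $f_i(\varepsilon_n)\to0$ gives $\int_D\omega_{n,i}\,d\mathbf x=\kappa_i+o(1)$. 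Thus $\{\omega_n\}$ satisfies exactly the hypotheses of Theorem \ref{mthm}, and we conclude that $(\bar{\mathbf x}_1,\dots,\bar{\mathbf x}_k)$ is a critical point of $W$.

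It therefore suffices to prove that, when $\psi_0\equiv0$, all $\kappa_i>0$, $D$ is convex and $k\ge2$, the function $W$ in \eqref{krf} has \emph{no} critical point consisting of distinct interior points; this is where convexity must enter. Suppose $(\mathbf x_1,\dots,\mathbf x_k)$ were such a point. Writing $\mathcal H(\mathbf x):=\sum_{b=1}^k\kappa_b h(\mathbf x,\mathbf x_b)$ for the regular (image) part of the total stream function, a harmonic function on $D$, the relations $\nabla_{\mathbf x_i}W=0$ together with $\nabla_1 h(\mathbf x_i,\mathbf x_i)=\tfrac12\nabla H(\mathbf x_i)$ and $\nabla_1 G=-\tfrac1{2\pi}\tfrac{\mathbf x-\mathbf y}{|\mathbf x-\mathbf y|^2}-\nabla_1 h$ reduce, after cancellation, to the clean identity
\begin{equation*}
\nabla\mathcal H(\mathbf x_i)=-\frac{1}{2\pi}\sum_{b\ne i}\kappa_b\frac{\mathbf x_i-\mathbf x_b}{|\mathbf x_i-\mathbf x_b|^2},\qquad i=1,\dots,k.
\end{equation*}
I would then select the ``outermost'' vortex: let $\mathbf x_{i_0}$ maximize $H$ among $\mathbf x_1,\dots,\mathbf x_k$, and set $\mathbf e:=\nabla H(\mathbf x_{i_0})/|\nabla H(\mathbf x_{i_0})|$ (here $\nabla H(\mathbf x_{i_0})\ne0$, since the maximizer cannot be the unique minimum of $H$). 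By the classical convexity of the Robin function on a convex domain, the sublevel set $\{H\le H(\mathbf x_{i_0})\}$ is convex with outward normal $\mathbf e$ at $\mathbf x_{i_0}$; as the remaining vortices lie in this set, $(\mathbf x_{i_0}-\mathbf x_b)\cdot\mathbf e\ge0$ for every $b$. Projecting the identity above onto $\mathbf e$ gives
\begin{equation*}
\mathbf e\cdot\nabla\mathcal H(\mathbf x_{i_0})=-\frac{1}{2\pi}\sum_{b\ne i_0}\kappa_b\frac{(\mathbf x_{i_0}-\mathbf x_b)\cdot\mathbf e}{|\mathbf x_{i_0}-\mathbf x_b|^2}\le0,
\end{equation*}
since every $\kappa_b>0$: at a steady configuration the image part of the stream function cannot increase in the outward Robin direction at the extremal vortex.

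The contradiction I aim for is that convexity of $D$ forces the opposite strict inequality $\mathbf e\cdot\nabla\mathcal H(\mathbf x_{i_0})>0$: in a convex domain the boundary, acting through its image system, ought to push a positive vortex inward at least as strongly as its like-signed companions push it outward, so the regular part of the stream function must genuinely grow toward the boundary at the outermost vortex. I expect this last step to be the crux and the principal obstacle. It cannot be done term by term: in the splitting $\mathbf e\cdot\nabla\mathcal H(\mathbf x_{i_0})=\tfrac{\kappa_{i_0}}2|\nabla H(\mathbf x_{i_0})|+\sum_{b\ne i_0}\kappa_b\,\mathbf e\cdot\nabla_1 h(\mathbf x_{i_0},\mathbf x_b)$ the cross terms can individually be negative, so one must show the positive Robin self-interaction dominates their sum. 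Making this rigorous seems to require a genuine convexity estimate for the regular part $h$ of the Green's function---for example, comparing $D$ with a supporting half-plane in the direction $\mathbf e$, whose regular part is explicit by reflection, and controlling the harmonic difference by the maximum principle---and it may well be necessary to invoke all $k$ stationarity conditions simultaneously rather than the single one at $\mathbf x_{i_0}$, since the desired sign can fail for non-stationary configurations. Once this convexity-driven inequality is secured, it contradicts the displayed bound and finishes the proof.
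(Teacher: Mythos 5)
Your first paragraph reproduces the paper's own argument and is correct: assume a contradicting sequence $\varepsilon_n\to0^+$ exists, pick points of the shrinking supports (the paper uses the centers of vorticity $\big(\int_D\omega_{n,i}\,d\mathbf x\big)^{-1}\int_D\mathbf x\,\omega_{n,i}\,d\mathbf x$, you use arbitrary points; both work), pass to a subsequence so that $\mathbf x_{n,i}\to\bar{\mathbf x}_i$, note that condition (2) forces the limit points to be distinct and interior, verify the hypotheses of Theorem~\ref{mthm}, and conclude that $(\bar{\mathbf x}_1,\dots,\bar{\mathbf x}_k)$ is a critical point of $W$ with $\psi_0\equiv0$. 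Up to this point your proposal coincides with the paper.

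The gap is everything after. The paper finishes in one line by citing a known result of Grossi and Takahashi (Theorem~3.2 of \cite{GT}, restated as Theorem~\ref{gtt}): on a smooth convex domain, with $k\ge2$, all $\kappa_i>0$ and $\psi_0\equiv0$, the Kirchhoff--Routh function \eqref{krf} has no critical point at all. You instead attempt to prove this nonexistence statement from scratch, and your argument does not close. What you do establish is fine: the identity $\nabla\mathcal H(\mathbf x_i)=-\frac{1}{2\pi}\sum_{b\ne i}\kappa_b\,(\mathbf x_i-\mathbf x_b)/|\mathbf x_i-\mathbf x_b|^2$ is a correct rewriting of $\nabla_{\mathbf x_i}W=\mathbf 0$, and the inequality $\mathbf e\cdot\nabla\mathcal H(\mathbf x_{i_0})\le0$ at the $H$-maximal vortex follows once one grants the (nontrivial, Caffarelli--Friedman) strict convexity of the Robin function of a convex planar domain. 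But the contradiction requires the opposite strict inequality $\mathbf e\cdot\nabla\mathcal H(\mathbf x_{i_0})>0$, and this you explicitly leave unproved, observing yourself that it cannot be obtained term by term, that the cross terms $\mathbf e\cdot\nabla_1h(\mathbf x_{i_0},\mathbf x_b)$ can individually be negative, and that the desired sign can fail for non-stationary configurations. That missing inequality is not a routine detail: it is essentially the whole analytic content that convexity of $D$ (as opposed to convexity of $H$) must supply, i.e.\ it is equivalent in difficulty to the Grossi--Takahashi theorem itself, and there is no evidence in your sketch that the outermost-vortex/half-plane comparison can be made to work. As it stands, your proposal proves Theorem~\ref{none} only modulo an unproved conjecture; it becomes a complete proof --- and then essentially the paper's proof --- the moment the second half is replaced by a citation of \cite{GT}.
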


\section{Proof of Theorem \ref{mthm}}

First we need the following lemma.

\begin{lemma}\label{lem}
Let $\omega\in L^{4/3}(\mathbb R^2)$ with compact support. Define
\begin{equation*}
f(\mathbf x)=\int_{\mathbb R^2}\ln|\mathbf x-\mathbf y|\omega(\mathbf y)d\mathbf y.
\end{equation*}
Then $f\in W^{2,4/3}_{\rm loc}(\mathbb R^2)$ and the distributional partial derivatives of $f$ can be expressed as
\begin{equation}\label{deri}
\partial_{x_i} f(\mathbf x)=\int_{\mathbb R^2}\frac{x_i-y_i}{|\mathbf x-\mathbf y|^2}\omega(\mathbf y)d\mathbf y\quad \text{a.e. }\,\mathbf x\in\mathbb R^2, \,\,i=1,2.
\end{equation}
\end{lemma}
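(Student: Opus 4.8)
The plan is to regard $f$ as the logarithmic (Newtonian) potential of $\omega$ and to exploit the local integrability of the gradient kernel together with the Calder\'on--Zygmund theory for the (non-integrable) second-order kernel. Write $g_i(\mathbf x):=\int_{\mathbb R^2}\frac{x_i-y_i}{|\mathbf x-\mathbf y|^2}\omega(\mathbf y)\,d\mathbf y$ for the candidate first derivatives. First I would verify that $f$ and each $g_i$ are well defined and locally $L^{4/3}$. Since $\omega$ is supported in some ball $B_R$, on any bounded set $B_\rho$ the potentials are convolutions of $\omega$ against the truncated kernels $\ln|\cdot|\,\chi_{B_{\rho+R}}$ and $\frac{z_i}{|z|^2}\chi_{B_{\rho+R}}$; both kernels lie in $L^1$ (the logarithm is in every $L^q_{\rm loc}(\mathbb R^2)$, and $|z|^{-1}\in L^1_{\rm loc}(\mathbb R^2)$ in dimension two), so Young's inequality with exponents $1+\tfrac34=1+\tfrac34$ gives $f,g_i\in L^{4/3}(B_\rho)$.

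Next I would establish \eqref{deri} in the distributional sense. For $\phi\in C_c^\infty(\mathbb R^2)$, Fubini's theorem (legitimate by the integrability just noted) gives $\int f\,\partial_{x_i}\phi\,d\mathbf x=\int\omega(\mathbf y)\big(\int \ln|\mathbf x-\mathbf y|\,\partial_{x_i}\phi(\mathbf x)\,d\mathbf x\big)\,d\mathbf y$. For each fixed $\mathbf y$, an integration by parts performed by excising a small disk around $\mathbf x=\mathbf y$ and checking that the boundary contribution tends to $0$ with the radius turns the inner integral into $-\int \frac{x_i-y_i}{|\mathbf x-\mathbf y|^2}\phi(\mathbf x)\,d\mathbf x$. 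A second application of Fubini then identifies the whole expression with $-\int g_i\phi\,d\mathbf x$, which is exactly the statement $\partial_{x_i}f=g_i$.

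The crux is upgrading to $f\in W^{2,4/3}_{\rm loc}(\mathbb R^2)$. Formally differentiating $g_i$ produces the kernel $\partial_{x_j}\!\big(z_i/|z|^2\big)=\delta_{ij}|z|^{-2}-2z_iz_j|z|^{-4}$, which is homogeneous of degree $-2=-n$ with vanishing mean over circles, i.e. a Calder\'on--Zygmund kernel, so the associated singular integral is bounded on $L^{4/3}$. To avoid manipulating the principal value of a nonsmooth $\omega$ directly, I would approximate: pick $\omega_n\in C_c^\infty(\mathbb R^2)$, all supported in a fixed ball, with $\omega_n\to\omega$ in $L^{4/3}$, and set $f_n=\ln|\cdot|*\omega_n$. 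Each $f_n$ is smooth, satisfies $\Delta f_n=2\pi\omega_n$, and obeys the formulas classically, so the Calder\'on--Zygmund inequality gives $\|D^2 f_n\|_{L^{4/3}(B_\rho)}\le C\|\omega_n\|_{L^{4/3}}$, a bound uniform in $n$. Together with the strong $L^{4/3}_{\rm loc}$ convergences $f_n\to f$ and $\partial_{x_i}f_n\to g_i$ (again from the local integrability of the respective kernels), weak compactness in $W^{2,4/3}(B_\rho)$ forces the weak limit to equal $f$ and yields $f\in W^{2,4/3}_{\rm loc}$ with $\partial_{x_i}f=g_i$.

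I expect the last step to be the main obstacle: the second-order kernel is not locally integrable, so naive differentiation under the integral sign is invalid, and one genuinely needs the $L^p$-boundedness of the singular integral (equivalently the Calder\'on--Zygmund $W^{2,p}$ estimate) together with the approximation argument to reach the nonsmooth limit. The restriction to the exponent $4/3\in(1,\infty)$ is precisely what makes this boundedness available; the first-order formula, by contrast, is comparatively elementary because its kernel is locally integrable.
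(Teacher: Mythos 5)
Your proposal is correct and rests on exactly the same two pillars as the paper's proof, which simply cites the Calder\'on--Zygmund estimate for the $W^{2,4/3}_{\rm loc}$ regularity and Theorem 6.21 of Lieb--Loss for the formula \eqref{deri}. Your Fubini plus integration-by-parts argument (with excision of a small disk) is precisely the standard proof behind the Lieb--Loss citation, and your smooth-approximation with uniform Calder\'on--Zygmund bounds and weak compactness is the standard justification behind the paper's appeal to the Calder\'on--Zygmund estimate for non-smooth $\omega$; you have merely unpacked the two citations rather than taken a different route.
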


\begin{proof}
By the Calderon-Zygmund estimate we have $f\in W^{2,4/3}_{\rm loc}(\mathbb R^2)$. The expression \eqref{deri} follows from Theorem 6.21 on page 157, \cite{LL}.

\end{proof}

Now we are ready to prove Theorem \ref{mthm}. The key point of the proof is to use the anti-symmetry of the singular part of the Biot-Savart kernel.

\begin{proof}[Proof of Theorem \ref{mthm}]
Fix $l\in\{1,\cdot\cdot\cdot,k\}$. It is sufficient to show that
\[\nabla_{{\mathbf x}_l}W(\bar {\mathbf x}_1,\cdot\cdot\cdot,\bar {\mathbf x}_k)=\mathbf 0.\]

Let $r_0$ be a small positive number such that
\[r_0<\text{dist}(\bar{\mathbf x}_i,\partial D)\quad \forall\,1\leq i\leq k,\quad r_0<\frac{1}{2}\text{dist}(\bar{\mathbf x}_i,\bar{\mathbf x}_j)\quad \forall\,1\leq i<j\leq k.\]
Choose $\phi(\mathbf x)=\rho(\mathbf x)\mathbf b\cdot \mathbf x$ in Definition \ref{wsve}, where $\mathbf b$ is a constant planar vector and $\rho$ satisfies
\[\rho\in C_c^\infty( D),\,\,\rho\equiv 1\text{ in } B_{r_0}(\bar {\mathbf  x}_l),\,\,\rho\equiv 0\text{ in } B_{r_0}(\bar {\mathbf  x}_i)\,\,\forall\,i\neq l.\]
Existence of such $\rho$ can be easily obtained by mollifying a suitable patch function.
Then we have
\[\int_D\omega_n\nabla^\perp\left(\mathcal G\omega_n+\psi_0\right)\cdot\nabla\phi d\mathbf x
=0.\]
Denote
\[A_n=\int_D\omega_n\nabla^\perp\mathcal G\omega_n\cdot\nabla\phi d\mathbf x,\quad B_n=\int_D\omega_n\nabla^\perp\psi_0\cdot\nabla\phi d\mathbf x.\]
Then \begin{equation}\label{ab}
A_n+B_n=0,\quad n=1,2,\cdot\cdot\cdot.
\end{equation}
Below we analyze $A_n$ and $B_n$ separately.

For $A_n$, we have
\begin{align*}
 A_n=&\int_D\omega_n(\mathbf x)\nabla^\perp_{\mathbf x}\int_D\left(-\frac{1}{2\pi}\ln|\mathbf x-\mathbf y|-h(\mathbf x,\mathbf y)\right)\omega_n(\mathbf y)d\mathbf y\cdot\nabla\phi d\mathbf x\\ 
 =&-\frac{1}{2\pi}\int_D\omega_n(\mathbf x)\int_D\frac{(\mathbf x-\mathbf y)^\perp}{|\mathbf x-\mathbf y|^2}\omega_n(\mathbf y)d\mathbf y\cdot\nabla\phi d\mathbf x-\int_D\omega_n\int_D\nabla^\perp_{\mathbf x}h(\mathbf x,\mathbf y)\omega_n(\mathbf x)(\mathbf y)d\mathbf y\cdot\nabla\phi d\mathbf x.
 \end{align*}
 Here we used Lemma \ref{lem} and the facts that $h\in C^\infty(D\times D)$ and $\omega_n$ has compact support in $D$.
Since $\omega_n\in L^{4/3}(D)$, by the Hardy-Littlewood-Sobolev inequality (see Theorem 0.3.2 in \cite{SO}) we have
 \[\int_D\frac{|\omega_n(\mathbf y)|}{|\mathbf x-\mathbf y|}d\mathbf y\in L^4(D).\]
Now we can apply Fubini's theorem (see \cite{ru}, page 164) to obtain
\[\frac{(\mathbf x-\mathbf y)^\perp}{|\mathbf x-\mathbf y|^2}\cdot\nabla\phi\omega_n(\mathbf x)\omega_n(\mathbf y)\in L^1(D\times D)\]
and
\[\int_D\omega_n(\mathbf x)\int_D\frac{(\mathbf x-\mathbf y)^\perp}{|\mathbf x-\mathbf y|^2}\omega_n(\mathbf y)d\mathbf y\cdot\nabla\phi d\mathbf x=\int_D\int_D\frac{(\mathbf x-\mathbf y)^\perp\cdot\nabla\phi}{|\mathbf x-\mathbf y|^2}\omega_n(\mathbf x)\omega_n(\mathbf y) d\mathbf xd\mathbf y.\]
Thus we have obtained
 \begin{align*}
 A_n=-\frac{1}{2\pi}\int_D\int_D\frac{(\mathbf x-\mathbf y)^\perp\cdot\nabla\phi}{|\mathbf x-\mathbf y|^2}\omega_n(\mathbf x)\omega_n(\mathbf y) d\mathbf xd\mathbf y-\int_D\int_D\nabla^\perp_{\mathbf x}h(\mathbf x,\mathbf y)\cdot\nabla\phi\omega_n(\mathbf x) \omega_n(\mathbf y)d\mathbf xd\mathbf y.
 \end{align*}
 Substituting $\phi(\mathbf x)=\rho(\mathbf x)\mathbf b\cdot\mathbf x$ in $A_n$, for sufficiently large $n$ we have
  \begin{align*}
 A_n&=-\frac{1}{2\pi}\int_D\int_{D}\frac{(\mathbf x-\mathbf y)^\perp\cdot\mathbf b}{|\mathbf x-\mathbf y|^2}\omega_{n,l}(\mathbf x)\omega_n(\mathbf y) d\mathbf xd\mathbf y-\int_D\int_D\nabla^\perp_{\mathbf x}h(\mathbf x,\mathbf y)\cdot\mathbf b\omega_{n,l}(\mathbf x) \omega_n(\mathbf y)d\mathbf xd\mathbf y\\
  =&-\frac{1}{2\pi}\sum_{j=1}^k\int_D\int_D\frac{(\mathbf x-\mathbf y)^\perp \cdot\mathbf b}{|\mathbf x-\mathbf y|^2}\omega_{n,l}(\mathbf x)\omega_{n,j}(\mathbf y)d\mathbf xd\mathbf y
 -\sum_{j=1}^k\int_D\int_D\nabla_{\mathbf x}^\perp h(\mathbf x,\mathbf y)\cdot\mathbf b\omega_{n,l}(\mathbf x)\omega_{n,j}(y)d\mathbf xd\mathbf y\\
 =&-\frac{1}{2\pi}\int_D\int_D\frac{(\mathbf x-\mathbf y)^\perp \cdot\mathbf b}{|\mathbf x-\mathbf y|^2}\omega_{n,l}(\mathbf x)\omega_{n,l}(\mathbf y)d\mathbf xd\mathbf y
 -\frac{1}{2\pi}\sum_{j=1,j\neq l}^k\int_D\int_D\frac{(\mathbf x-\mathbf y)^\perp \cdot\mathbf b}{|\mathbf x-\mathbf y|^2}\omega_{n,l}(\mathbf x)\omega_{n,j}(\mathbf y)d\mathbf xd\mathbf y\\
 &-\sum_{j=1}^k\int_D\int_D\nabla_{\mathbf x}^\perp h(\mathbf x,\mathbf y)\cdot\mathbf b\omega_{n,l}(\mathbf x)\omega_{n,j}(\mathbf y)d\mathbf xd\mathbf y\\
 =&-\frac{1}{2\pi}\int_D\int_D\frac{(\mathbf x-\mathbf y)^\perp \cdot\mathbf b}{|\mathbf x-\mathbf y|^2}\omega_{n,l}(\mathbf x)\omega_{n,l}(\mathbf y)d\mathbf xd\mathbf y
 +\sum_{j=1,j\neq l}^k\int_D\int_D\nabla_{\mathbf x}^\perp G(\mathbf x,\mathbf y)\cdot\mathbf b\omega_{n,l}(\mathbf x)\omega_{n,j}(\mathbf y)d\mathbf xd\mathbf y\\
 &-\int_D\int_D\nabla_{\mathbf x}^\perp h(\mathbf x,\mathbf y)\cdot\mathbf b\omega_{n,l}(\mathbf x)\omega_{n,l}(\mathbf y)d\mathbf xd\mathbf y\\
 :=&C_n+D_n,
\end{align*}
where
\[C_n=-\frac{1}{2\pi}\int_D\int_D\frac{(\mathbf x-\mathbf y)^\perp \cdot\mathbf b}{|\mathbf x-\mathbf y|^2}\omega_{n,l}(\mathbf x)\omega_{n,l}(\mathbf y)d\mathbf xd\mathbf y,\]
\[D_n=\left(\sum_{j=1,j\neq l}^k\int_D\int_D\nabla_{\mathbf x}^\perp G(\mathbf x,\mathbf y)\omega_{n,l}(\mathbf x)\omega_{n,j}(\mathbf y)d\mathbf xd\mathbf y
-\int_D\int_D\nabla_{\mathbf x}^\perp h(\mathbf x,\mathbf y)\omega_{n,l}(\mathbf \mathbf x)\omega_{n,l}(\mathbf y)d\mathbf xd\mathbf y\right)\cdot\mathbf b.\]
By the anti-symmetric property of the integrand in $C_n$, we see that 
\[C_n=0 \quad \text{for sufficiently large $n$}.\]
For $D_n,$ it is clear that
\[\lim_{n\to+\infty}D_n=\left(\sum_{j=1,j\neq l}^k \kappa_l\kappa_j\nabla_{\mathbf x}^\perp G(\bar {\mathbf x}_l,\bar {\mathbf x}_j)
-\kappa_l^2\nabla_{\mathbf x}^\perp h(\bar {\mathbf x}_l,\bar {\mathbf x}_l)\right)\cdot\mathbf b.\]
To conclude, we have obtained
\begin{equation}\label{a}
\lim_{n\to+\infty}A_n=\left(\sum_{j=1,j\neq l}^k \kappa_l\kappa_j\nabla_x^\perp G(\bar {\mathbf x}_l,\bar {\mathbf x}_j)
-\kappa_l^2\nabla_{\mathbf x}^\perp h(\bar {\mathbf x}_l,\bar {\mathbf x}_l)\right)\cdot\mathbf b.
\end{equation}
For $B_n,$ it is also clear that
\begin{equation}\label{b}
\lim_{n\to+\infty}B_n=\kappa_l\nabla^\perp\psi_0(\bar{\mathbf x}_l)\cdot\mathbf b.
\end{equation}

Combining \eqref{ab}, \eqref{a} and \eqref{b} we immediately get

\[\left(\sum_{j=1,j\neq l}^k \kappa_l\kappa_j\nabla_{\mathbf x}^\perp G(\bar {\mathbf x}_l,\bar {\mathbf x}_j)
-\kappa_l^2\nabla_{\mathbf x}^\perp h(\bar {\mathbf x}_l,\bar {\mathbf x}_l)+\kappa_l\nabla^\perp\psi_0(\bar{\mathbf x}_l)\right)\cdot \mathbf b= 0\]
for sufficiently large $n$.
Since $\mathbf b$ can be any constant vector, we deduce that
\[\sum_{j=1,j\neq l}^k \kappa_l\kappa_j\nabla_{\mathbf x}^\perp G(\bar {\mathbf x}_l,\bar {\mathbf x}_j)
-\kappa_l^2\nabla_{\mathbf x}^\perp h(\bar {\mathbf x}_l,\bar {\mathbf x}_l)+\kappa_l\nabla^\perp\psi_0(\bar{\mathbf x}_l)= \mathbf 0,\]
which is exactly
\[\nabla_{{\mathbf x}_l}W(\bar {\mathbf x}_1,\cdot\cdot\cdot,\bar {\mathbf x}_k)=\mathbf 0.\]

\end{proof}

\section{Proof of Theorem \ref{none}}

In this section we give the proof of Theorem \ref{none}. To begin with, we need an important property of the Kirchhoff-Routh function in a convex domain proved by Grossi and Takahashi. We only state the following simple version of their result which is enough for our use.

\begin{theorem}[Grossi--Takahashi, Theorem 3.2, \cite{GT}]\label{gtt}
Let $D$ be a smooth convex domain, $k\geq 2$ be a positive integer and $\kappa_1,\cdot\cdot\cdot,\kappa_k$ be $k$ positive numbers. If $\psi_0\equiv0,$ then the Kirchhoff-Routh function $W$ defined by \eqref{krf} has no critical point in
\[\underbrace{D\times\cdot\cdot\cdot\times D}_{k\text { times}}\setminus \{(\mathbf x_1,\cdot\cdot\cdot,\mathbf x_k)\mid \mathbf x_i\in D, \mathbf x_i=\mathbf x_j \text{ for some }i\neq j\}.\]
\end{theorem}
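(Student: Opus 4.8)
The plan is to argue by contradiction, exploiting the fact that in a convex domain both the Green's function and the Robin function are monotone in the direction pointing toward $\partial D$. Write $H(\mathbf x)=h(\mathbf x,\mathbf x)$ for the Robin function. Since $\psi_0\equiv 0$,
\[
W(\mathbf x_1,\cdot\cdot\cdot,\mathbf x_k)=-\sum_{1\le i<j\le k}\kappa_i\kappa_j G(\mathbf x_i,\mathbf x_j)+\frac12\sum_{i=1}^k\kappa_i^2H(\mathbf x_i),
\]
and, using the symmetry $G(\mathbf x,\mathbf y)=G(\mathbf y,\mathbf x)$,
\[
\nabla_{\mathbf x_l}W=-\sum_{j\ne l}\kappa_l\kappa_j\nabla_{\mathbf x}G(\mathbf x_l,\mathbf x_j)+\frac12\kappa_l^2\nabla H(\mathbf x_l),
\]
where $\nabla_{\mathbf x}$ denotes the gradient in the first argument. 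Hence a critical configuration satisfies, for every index $l$ and every unit vector $\mathbf e$,
\begin{equation}\label{critid}
\sum_{j\ne l}\kappa_l\kappa_j\,\partial_{\mathbf e}G(\mathbf x_l,\mathbf x_j)=\frac12\kappa_l^2\,\partial_{\mathbf e}H(\mathbf x_l).
\end{equation}
I would derive a contradiction from \eqref{critid} for one well-chosen index $l=i_0$ and one well-chosen direction $\mathbf e$.

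I would first invoke two classical properties of convex domains, which is exactly where convexity enters: (F1) $H$ is strictly convex, tends to $+\infty$ on $\partial D$, and attains a unique interior minimum (the Robin center); (F2) if $\mathbf x_0\in D$ and $\mathbf e$ is a unit vector such that the cap $D\cap\{(\mathbf z-\mathbf x_0)\cdot\mathbf e>0\}$ is carried into $D$ by reflection across the hyperplane through $\mathbf x_0$ orthogonal to $\mathbf e$, then $\partial_{\mathbf e}G(\mathbf x_0,\mathbf y)\le 0$ for every $\mathbf y\in D$ with $(\mathbf y-\mathbf x_0)\cdot\mathbf e\le 0$, with strict inequality when $(\mathbf y-\mathbf x_0)\cdot\mathbf e<0$. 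Property (F2) follows from the moving-plane method: comparing $G(\cdot,\mathbf y)$ with its reflection shows, via the maximum principle, that $G$ at an interior point exceeds its value at the corresponding reflected cap point, and Hopf's lemma on the reflection hyperplane (which contains $\mathbf x_0$) converts this into the stated sign of the normal derivative. Intuitively, pushing $\mathbf x_0$ toward the boundary and away from $\mathbf y$ only decreases their Green interaction.

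The selection of $(i_0,\mathbf e)$ is the heart of the argument. Let $\mathbf x_{i_0}$ maximize $H$ over $\mathbf x_1,\cdot\cdot\cdot,\mathbf x_k$ and set $\mathbf e=\nabla H(\mathbf x_{i_0})/|\nabla H(\mathbf x_{i_0})|$. This is legitimate: if $\nabla H(\mathbf x_{i_0})=0$ then, by (F1), $\mathbf x_{i_0}$ would be the Robin center and hence the global minimizer of $H$, forcing every $\mathbf x_j$ to equal $\mathbf x_{i_0}$, which contradicts $k\ge 2$ together with the distinctness of the points in the configuration space. Convexity of $H$ now makes $\mathbf x_{i_0}$ extremal in the direction $\mathbf e$: from $H(\mathbf x_j)\ge H(\mathbf x_{i_0})+\nabla H(\mathbf x_{i_0})\cdot(\mathbf x_j-\mathbf x_{i_0})$ and $H(\mathbf x_j)\le H(\mathbf x_{i_0})$ one obtains $(\mathbf x_j-\mathbf x_{i_0})\cdot\mathbf e\le 0$ for every $j$. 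Thus all other vortices lie on the far side of the supporting hyperplane at $\mathbf x_{i_0}$, which is exactly the configuration required to apply (F2) with $\mathbf x_0=\mathbf x_{i_0}$.

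It remains to read off a contradiction from \eqref{critid} with $l=i_0$. By construction $\partial_{\mathbf e}H(\mathbf x_{i_0})=|\nabla H(\mathbf x_{i_0})|>0$, so the right-hand side is strictly positive; on the other hand each term on the left satisfies $\kappa_{i_0}\kappa_j\,\partial_{\mathbf e}G(\mathbf x_{i_0},\mathbf x_j)\le 0$ by (F2) and $\kappa_i>0$, so the left-hand side is nonpositive. This is impossible, and therefore $W$ has no critical point. I expect the genuine difficulty to be (F2): establishing the monotonicity of the Green's function together with verifying that, for the direction $\mathbf e=\nabla H(\mathbf x_{i_0})$ selected above, the cap beyond $\mathbf x_{i_0}$ indeed reflects into $D$, i.e. that the moving plane orthogonal to $\nabla H(\mathbf x_{i_0})$ through $\mathbf x_{i_0}$ has not passed its critical position. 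Controlling this reflection is where the convexity of $D$ must be used in full, and it constitutes the essential content of the cited result of Grossi and Takahashi; the remaining steps are elementary once (F1) and (F2) are in hand.
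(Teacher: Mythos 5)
A preliminary remark: the paper offers no proof of this statement at all --- it is quoted, with attribution, as Theorem 3.2 of Grossi and Takahashi \cite{GT} and is used as a black box in the proof of Theorem \ref{none}. So your proposal can only be measured against the cited source, not against an in-paper argument. That said, your architecture is reasonable and close in spirit to the cited work: reducing criticality of $W$ to the identity $\sum_{j\neq l}\kappa_l\kappa_j\,\partial_{\mathbf e}G(\mathbf x_l,\mathbf x_j)=\frac12\kappa_l^2\,\partial_{\mathbf e}H(\mathbf x_l)$ is a correct computation (using the symmetry of $h$); choosing $i_0$ to maximize $H$ over the configuration and using strict convexity of the Robin function on planar convex domains --- your (F1), which is a genuine theorem of Caffarelli and Friedman (Duke Math. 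J. 52 (1985)) and should be cited as such --- correctly places every other $\mathbf x_j$ in the half-space $\{(\mathbf z-\mathbf x_{i_0})\cdot\mathbf e\le 0\}$; and the maximum-principle-plus-Hopf mechanism inside (F2) does deliver $\partial_{\mathbf e}G(\mathbf x_{i_0},\mathbf y)\le 0$ \emph{provided} the reflection hypothesis holds.

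The genuine gap is precisely that proviso: you never establish that the cap $D\cap\{(\mathbf z-\mathbf x_{i_0})\cdot\mathbf e>0\}$, with $\mathbf e=\nabla H(\mathbf x_{i_0})/|\nabla H(\mathbf x_{i_0})|$, is carried into $D$ by reflection, and this is not a routine verification that can be deferred. Through a fixed interior point of a convex domain most hyperplanes fail the reflection condition: in the unit disk a cap reflects inside only when it does not contain the centre, and for less symmetric convex domains the set of points through which no hyperplane in any direction is admissible (the complement of the union of all maximal caps of the moving-plane method) can be a full two-dimensional ``core''. The moving-plane literature supplies the implication ``admissible plane $\Rightarrow$ $\partial_{\mathbf e}G(\mathbf x_0,\cdot)\le 0$ behind the plane and $\partial_{\mathbf e}H>0$ in the cap''; your argument needs a converse-type statement --- that wherever $\nabla H(\mathbf x_0)\neq\mathbf 0$, the plane through $\mathbf x_0$ orthogonal to $\nabla H(\mathbf x_0)$ has not passed its critical position --- and nothing in your outline supplies it, nor is it a standard fact one can simply quote; it is exactly where the interplay between convexity of $D$, the Green's function, and the Robin function must be exploited. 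Since you yourself concede that this step ``constitutes the essential content of the cited result'', what you have produced is a correct reduction of the theorem to its hard core rather than a proof of it: the contradiction at the end is obtained only modulo the one assertion that carries all the difficulty.
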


\begin{proof}[Proof of Theorem \ref{none}]
Suppose, by contradiction, that there exist a sequence of positive numbers $\{\varepsilon_{n}\}_{n=1}^{+\infty}$, $\varepsilon_n\to0^+$ as $n\to+\infty$, and a sequence of weak solutions $\{\omega_n\}_{n=1}^{+\infty}$ to the vorticity equation \eqref{ve} such that
\begin{itemize}
\item[(i)] $\omega_n=\sum_{i=1}^k\omega_{n,i},$ $\omega_{n,i}\in L^{4/3}(D), i=1\cdot\cdot\cdot,k;$
\item[(ii)] $\text{dist(supp}\omega_{n,i},\text{supp}\omega_{n,j}) >\delta_0 \,\,\,\forall\,1\leq i<j\leq k$ and
$\text{dist(supp}\omega_{n,i},\partial D)>\delta_0\,\,\, \forall\,1\leq i\leq k;$
\item[(iii)] diam(supp$\omega_{n,i}$)<$\varepsilon_n,\,\,i=1,\cdot\cdot\cdot,k.$
\item[(iv)] $\int_D\omega_{n,i}d\mathbf x=\kappa_i+f_i(\varepsilon_n),\,\,i=1,\cdot\cdot\cdot,k.$
\end{itemize}
Define
\[{\mathbf x}_{n,i}=\left(\int_{D}\omega_{n,i}d\mathbf x\right)^{-1}\int_{D}\mathbf x\omega_{n,i}d\mathbf x,\quad i=1\cdot\cdot\cdot,k.\]
By (ii) and (iii) we see that 
\[\text{dist}(\mathbf x_{n,i},\mathbf x_{n,j}) \geq\frac{\delta_0}{2} \,\,\, \forall\,1\leq i<j\leq k,\quad
\text{dist}(\mathbf x_{n,i},\partial D)\geq\frac{\delta_0}{2}\,\,\, \forall\,1\leq i\leq k\]
if $n$ is large enough.
Thus we can choose a subsequence $\{\mathbf x_{n_m,i}\}$ such that $\mathbf x_{n_m,i}\to\bar {\mathbf x}_i$ as $m\to+\infty, i=1,\cdot\cdot\cdot,k,$ where $\bar{\mathbf x}_1,\cdot\cdot\cdot,\bar{\mathbf x}_k$ satisfy
 \[\text{dist}(\bar{\mathbf x}_{i},\bar {\mathbf x}_{j}) \geq\frac{\delta_0}{2} \,\,\, \forall\,1\leq i<j\leq k,\quad
\text{dist}(\bar{\mathbf x}_{i},\partial D)\geq\frac{\delta_0}{2}\,\,\, \forall\,1\leq i\leq k.\]

Now we can see that the sequence of solutions$\{\omega_{n_m}\}$ satisfies the assumptions in Theorem \ref{mthm}, and therefore $(\bar{\mathbf x}_1,\cdot\cdot\cdot,\bar{\mathbf x}_k)$ must be a critical point of $W$ (with $\psi_0\equiv 0$). This is a contradiction to Theorem \ref{gtt}.

\end{proof}

\smallskip

{\bf Acknowledgements:}
{G. Wang was supported by National Natural Science Foundation of China (12001135, 12071098) and China Postdoctoral Science Foundation (2019M661261).}

\phantom{s}
 \thispagestyle{empty}

\end{document}